\def\<{\langle}
\def\>{\rangle}
\numberwithin{equation}{section}
\def\DD{{\mathcal D}}
\def\EE{{\mathcal E}}
\def\FF{{\mathcal F}}
\def\GG{{\mathcal G}}
\def\HH{{\mathcal H}}
\def\KK{{\mathcal K}}
\def\LL{{\mathcal L}}
\def\RR{{\mathcal R}}
\def\VV{{\mathcal V}}
\def\XX{{\mathcal X}}
\def\YY{{\mathcal Y}}
\def\bbC{\mathbb{C}}
\def\bbZ{\mathbb{Z}}
\def\bbN{\mathbb{N}}
\def\bbD{\mathbb{D}}
\newcommand{\rank}{\mathop{\rm rank}}
\newcommand{\Range}{\RR}
\newcommand{\Prec}{\preccurlyeq}
\newcommand{\Red}{\prec}
\newtheorem{lemma}{Lemma}[section]
\newtheorem{theorem}[lemma]{Theorem}
\newtheorem{corollary}[lemma]{Corollary}
\theoremstyle{definition}
\newtheorem{example}[lemma]{Example}
\title{On a preorder relation for contractions}
\author{Dan Timotin}
\address{Dan Timotin, Institute of Mathematics Simion Stoilow of the Romanian Academy, P.O. Box 1-764, Bucharest 014700,
	Romania}
\email{Dan.Timotin@imar.ro}
\keywords{contractions on Hilbert space, preoder relations, unitary equivalence}
\subjclass[2010]{47A20, 47A45}
\begin{document}

\begin{abstract}
	An order relation  for contractions on a Hilbert space can be introduced  by stating that  $A\Prec B$ if and only if $A$  is unitarily equivalent to the restriction of $B$ to an invariant subspace. We discuss the equivalence classes associated to this relation, and identify cases in which they coincide with classes of unitary equivalence. The results extend those for completely nonunitary partial isometries obtained by Garcia, Martin, and Ross.
\end{abstract}
\maketitle

\section{Introduction}

The starting point for this note is the paper~\cite{GMR}, which makes a detailed analysis of the class of
completely nonunitary partial isometries on a complex separable Hilbert space with equal defect indices (finite or infinite). 
This analysis, having its original source in a classical paper of Livsic~\cite{L},  also draws on subsequent work in~\cite{AMR, HML, M}.
Among other things, three preorder relations are introduced in~\cite{GMR}; 
each of these  induces an equivalence relation, and one of the questions addressed is when the classes of equivalence are precisely the classes of unitary equivalence. It is shown that the question can be rephrased as the existence of certain multipliers between model spaces (in the sense of~\cite{BR} or~\cite{NF}), and partial results are obtained.

We will concentrate on one of these preorder relations, that may be easily described: $A\Prec B$ if there exists a subspace $\YY\subset\HH_B$, invariant with respect to $B$, such that $A$ is unitarily equivalent to $B|\YY$.
The relation has an obvious extension to all contractions (even to general bounded operators, but we will stick to contractions).
 One of the tools in ~\cite{GMR} is the characteristic function of a partial isometry, and  a comprehensive theory of Sz.-Nagy--Foias~\cite{NF} extends the notion of characteristic function to all completely non unitary contractions.  On the other hand, the structure of unitary operators is well known by multiplicity theory~\cite{Ha}.
 
  Considering thus the relation $\Prec$ in the context of general contractions, we give in this paper a rather comprehensive answer to the question of deciding which corresponding classes of equivalence   coincide with classes of unitary equivalence.  Theorems 9.1 and 9.2 from~\cite{GMR} are thus significantly generalized. In short, the statement is true  if some finite multiplicity condition is satisfied, while it fails in general.

The plan of the paper is the following. We start with the necessary preliminaries about contractions. In Section~\ref{se:main problem} we introduce the preorder  relation, obtain some simple results, and state the main problem.   Section~\ref{se: n finite} investigates it for completely nonunitary contractions, using the Sz.-Nagy--Foias theory, which links invariant subspaces of a contraction to factorizations of its characteristic function. The main result here is Theorem~\ref{th:general n fnite}, where  finiteness of the defect spaces plays an essential role; it is shown by a counterexample  that this assumption cannot be dropped. Finally, in Section~\ref{se:general} one discusses the general situation of contractions that may have a unitary part, obtaining a partial  solution in Theorem~\ref{th:a more general result}.

\section{Preliminaries}\label{se:prelim}

We deal with contractions which may act on different Hilbert spaces; usually $A\in\LL(\HH_A)$, $B\in\LL(\HH_B)$. 
The kernel and the range of a contraction $A$ are denoted by $\ker A$ and $\RR(A)$ respectively.

For a  contraction  $A\in\LL(\HH_A)$ there is a unique decomposition $\HH_A=\HH_A^c\oplus \HH_A^u $ with respect to which  $A=A_c\oplus A_u$,  where $A_u$ is unitary, while $A_c$ is \emph{completely nonunitary} (c.n.u.), meaning that there is no non-zero subspace that reduces it to a unitary operator. Further on, the unitary part may be further decomposed as $\HH_A^u=\HH_A^a\oplus\HH_A^s$, with respect to which $A_u=A_a\oplus A_s$, with $A_a$ absolutely continuous and $A_s$ singular with respect to Lebesgue measure.

If $\YY$ is a subspace of a Hilbert space $\XX$, then $P_\YY$ denotes the orthogonal projection onto $\YY$. Occasionally we will write $P^\XX_\YY$ when the domain is relevant.

For any contraction $A\in\LL(\HH_A)$ a \emph{minimal unitary dilation} is a unitary operator $U\in\LL(\KK_A)$ with $\HH_A\subset \KK_A$ such that $A^n=P_{\HH_A}U^n|\HH_A$ for all $n\in \bbN$ and $\KK_A=\bigvee_{k=-\infty}^\infty U^k\HH_A$. The minimal unitary dilation is  uniquely defined up to unitary equivalence. 
In fact, only the c.n.u. part of a contraction has to be actually dilated: with the above notations, if $U_c$ is a minimal unitary dilation of $A_c$, then $U=U_c\oplus A_u$ is a minimal unitary dilation of $A$. Moreover, the minimal unitary dilation of a c.n.u. contraction is absolutely continuous with respect to Lebesgue measure.

If $A$ is a contraction, then the \emph{defect operator} $D_A$ is defined by $(I-A^*A)^{1/2}$, and the \emph{defect space} is $\DD_A=\overline{\Range(D_A)}$. 

\begin{lemma}\label{le:dimensions defects}
Suppose $B$ is a completely nonunitary contraction and	 $A\Prec B$. Then $\dim\DD_A\le \dim\DD_B$ and $\dim\DD_{A^*}\le\dim\DD_{B}+\dim\DD_{B^*} $.
\end{lemma}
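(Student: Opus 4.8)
The plan is to work with a concrete realization of the relation. Since $\dim\DD_A$ and $\dim\DD_{A^*}$ are unitary invariants, I may assume $A=B|\YY$ for an invariant subspace $\YY\subseteq\HH_B$, and write $B$ in block form relative to $\HH_B=\YY\oplus\YY^\perp$ with entries $A=B|\YY$, $X=P_\YY B|\YY^\perp$ and $Z=P_{\YY^\perp}B|\YY^\perp$ (the lower–left entry vanishes because $\YY$ is invariant). Then $A^*=P_\YY B^*|\YY$, so $A^*A=P_\YY B^*B|\YY$ and therefore $D_A^2=P_\YY D_B^2|\YY$. From this one reads off $\ker D_A=\{y\in\YY:\<D_B^2y,y\>=0\}=\YY\cap\ker D_B$, whence $\DD_A=\YY\ominus(\YY\cap\ker D_B)$. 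The first inequality then follows from a one–line observation: $P_{\DD_B}$ is injective on $\DD_A$, since $y\in\DD_A$ with $P_{\DD_B}y=0$ gives $y\in\YY\cap\ker D_B=\ker D_A$, and as $y\perp\ker D_A$ this forces $y=0$. Hence $\dim\DD_A\le\dim\DD_B$.

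For the second inequality the analogous computation gives $D_{A^*}^2=P_\YY D_{B^*}^2|\YY+XX^*$, that is $\|D_{A^*}y\|^2=\|D_{B^*}y\|^2+\|X^*y\|^2$ for $y\in\YY$; consequently $\ker D_{A^*}=(\YY\cap\ker D_{B^*})\cap\ker(X^*|\YY)$, and $y\mapsto(D_{B^*}y,X^*y)$ realizes $\DD_{A^*}$ isometrically inside $\DD_{B^*}\oplus\overline{\RR(X^*)}$. This already yields $\dim\DD_{A^*}\le\dim\DD_{B^*}+\dim\overline{\RR(X^*)}$, but the leakage term $\overline{\RR(X^*)}$ is in general bounded by neither $\DD_B$ nor $\DD_{B^*}$ separately, so this crude estimate is not enough and is exactly where the real work lies.

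To control the leakage correctly I would pass to the minimal unitary dilation $U\in\LL(\KK)$ of $B$ (with $\HH_B\subseteq\KK$), which, $B$ being completely non‑unitary, is absolutely continuous and carries the two wandering subspaces $\LL=U\HH_B\ominus\HH_B$ and $\LL_*=U^*\HH_B\ominus\HH_B$ with $\dim\LL=\dim\DD_B$ and $\dim\LL_*=\dim\DD_{B^*}$. Since $(U^*-B^*)y\in\LL_*\perp\HH_B$ for $y\in\HH_B$, one gets $P_\YY U^*|\YY=A^*$, so $A^*$ is the compression of the \emph{unitary} $U^*$ to $\YY$, and $\|D_{A^*}y\|=\|P_{\KK\ominus\YY}U^*y\|$. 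Applying $U$ this identifies $\dim\DD_{A^*}=\dim\big(\YY\ominus(\YY\cap U\YY)\big)$. Invariance of $\YY$ under $B$ gives $U\YY\subseteq\YY\oplus\LL$, and invariance of $\YY^\perp$ under $B^*$ gives $U^*\YY^\perp\subseteq\YY^\perp\oplus\LL_*$; the first pins the discrepancy between $\YY$ and $U\YY$ down to $\LL$ (dimension $\dim\DD_B$), the second to $\LL_*$ (dimension $\dim\DD_{B^*}$). The goal is then to assemble these into an injection of $\YY\ominus(\YY\cap U\YY)$ into $\LL\oplus\LL_*$, which delivers $\dim\DD_{A^*}\le\dim\DD_B+\dim\DD_{B^*}$.

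The main obstacle is precisely this final assembly. A single–step argument only sees $\DD_{B^*}$ together with the uncontrolled space $\overline{\RR(X^*)}$: a vector of $\overline{\RR(X^*)}$ can lie in $\ker D_B\cap\ker Z$, hence be invisible both to $D_B$ and to one application of $Z$, and reveal itself in the dilation only after \emph{further} applications of $U^*$. The delicate point is the orthogonal bookkeeping that matches the leakage against the two towers of $U$ simultaneously — equivalently, exploiting the coupling $ZX^*y=-P_{\YY^\perp}D_{B^*}^2y$ that links the two defects — and it is here that complete non‑unitarity of $B$ (absolute continuity, i.e. the bilateral–shift nature, of $U$) enters essentially; the finite‑dimensional case will also require keeping track that the image stays finite even though each tower is infinite.
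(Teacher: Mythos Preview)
Your argument for the first inequality is correct and complete; this is what the paper means by ``immediate.''

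For the second inequality, however, your proposal is not a proof but an outline that stops precisely at the hard step. You correctly compute $\DD_{A^*}=\YY\ominus(\YY\cap U\YY)$ and correctly observe that the na\"ive bound $\dim\DD_{A^*}\le\dim\DD_{B^*}+\dim\overline{\RR(X^*)}$ is insufficient (and it really is: for $B=S^*\otimes I_n$ on $H^2\otimes\bbC^n$ with $\YY$ the constants, one has $\dim\DD_B=0$ while $\dim\overline{\RR(X^*)}=n$). But the promised injection of $\YY\ominus(\YY\cap U\YY)$ into $\LL\oplus\LL_*$ is never constructed; you only describe why it is delicate. Without it there is no proof, and the closing remarks about ``orthogonal bookkeeping,'' the coupling $ZX^*=-P_{\YY^\perp}D_{B^*}^2|\YY$, and the ``finite-dimensional case'' do not indicate a concrete mechanism. (The result is a dimension inequality valid for all cardinalities; there is no separate finite-dimensional case to handle.)

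The paper takes a different, much shorter route: it simply invokes \cite[Proposition~VII.3.6]{NF}. In the Sz.-Nagy--Foias picture the invariant subspace $\YY$ yields a regular factorization $\Theta_B=\Theta_2\Theta_1$ with $\Theta_1:\DD_B\to\FF$, $\Theta_2:\FF\to\DD_{B^*}$, and the cited proposition bounds the intermediate space by $\dim\FF\le\dim\DD_B+\dim\DD_{B^*}$. Since $\Theta_A$ coincides with the pure part of $\Theta_1$, the space $\DD_{A^*}$ embeds into $\FF$, and the inequality follows. This is exactly the ``assembly'' you are missing, packaged through the functional model rather than through direct dilation geometry; if you want to complete your approach, that proposition (or its proof) is where to look.
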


\begin{proof}
	The first statement is immediate; the second follows, for instance, from ~\cite[Proposition VII.3.6]{NF}.
\end{proof}

The theory of contractions often splits in two.  Unitary operators are precisely described by spectral multiplicity theory (see, for instance,~\cite{Ha}). This can be considered standard material, which does not need further discussion. On the other hand, c.n.u. contractions form the object of the Sz.-Nagy--Foias theory, which may be found in~\cite{NF}. We will  now briefly present the part of the latter that is relevant for our purposes.

A central role is played by contractively valued analytic functions $\Xi(\lambda):\EE\to\EE_*$, where  $\lambda\in \bbD$; that is, $\Xi(\lambda)\in\LL(\EE, \EE_*)$ and $\|\Xi(\lambda)\|\le 1$ for all $\lambda\in\bbD$, and the dependence in $\lambda$ is analytic.
 Such a function is called \emph{pure} if $\|\Xi(\lambda)x\|<\|x\|$ for all $\lambda\in\bbD$ and $0\not=x\in\EE$. 
 For a general contractively valued analytic function $\Xi(\lambda)$ 
 there exist  orthogonal decompositions $\EE=\EE'\oplus\EE''$, $\EE_*=\EE'_*\oplus\EE''_*$ such that $\Xi(\lambda)=\Xi^p(\lambda)\oplus \Xi_0$, where $\Xi^p(\lambda)$ is pure, while $\Xi_0$ is a unitary constant that is independent of $\lambda$; then $\Xi^p(\lambda)$ is called the \emph{pure part} of $\Xi(\lambda)$.  Two operator valued analytic functions $\Xi(\lambda), \Xi'(\lambda)$ are said to \emph{coincide}  if there are unitaries $\tau, \tau'$ such that $\Xi'(\lambda)=\tau\Xi(\lambda)\tau'$ for all $\lambda\in\bbD$.

The characteristic function of a completely nonunitary contraction $T\in\LL(\HH)$ is the contractively valued analytic function $\Theta_T(\lambda):\DD_T\to \DD_{T^*}$, defined by
\[
\Theta_T(\lambda)=-T+\lambda D_{T^*}(I-\lambda T^*)^{-1}D_T|\DD_T,\quad \lambda\in\bbD.
\]
This function can be shown to be  pure.
%
It is a complete unitary invariant for c.n.u. contractions: two c.n.u. contractions are unitarily equivalent if and only if their characteristic functions coincide. This fact and its developments constitute the model theory for c.n.u. contractions, for which we refer to~\cite{NF}; a different, but equivalent, approach can be found in~\cite{BR}.


It is shown in~\cite[Chapter VII]{NF}  that invariant subspaces of contractions correspond to certain special factorizations of the characteristic function. We briefly present in the sequel the facts that we need; references are~\cite{NF1, NF}. If $T_1:\EE_1\to\EE_2$, $T_2:\EE_2\to \EE_3$ are contractions, the factorization $T_2T_1$ is called \emph{regular} if
\[
D_{T_2}\EE_2\cap D_{T_1^*}\EE_2=\{0\}.
\]
The next lemma gathers some immediate properties of regular factorizations, that we will use below in Section~\ref{se: n finite}.

\begin{lemma}\label{le:regular properties}
	{\rm (i) } With the above notations, if $\dim\EE_2>0$ and $T_1=0$, then $T_2T_1$ is regular iff $T_2$ is an isometry.
	
	 {\rm (ii) } The factorization $T_2'T_1'\oplus T_2''T_1''=(T_2'\oplus T_2'')(T_1'\oplus T_1'')$ is regular if and only if $T_2'T_1'$ and $T_2''T_1''$ are both regular.
\end{lemma}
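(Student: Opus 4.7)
\medskip

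\noindent\textbf{Proof proposal.} Both parts are essentially bookkeeping on the definition of regularity. Neither step presents a real obstacle; the main point is to track how the defect operators transform under the two operations in question.

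For (i), the plan is to note that $T_1=0:\EE_1\to\EE_2$ forces $T_1^*=0$ and hence $D_{T_1^*}=(I_{\EE_2}-T_1T_1^*)^{1/2}=I_{\EE_2}$, so $D_{T_1^*}\EE_2=\EE_2$. Since $D_{T_2}\EE_2$ is itself a subset of $\EE_2$, the condition $D_{T_2}\EE_2\cap D_{T_1^*}\EE_2=\{0\}$ collapses to $D_{T_2}\EE_2=\{0\}$, equivalently $D_{T_2}=0$, equivalently $T_2^*T_2=I$. The hypothesis $\dim\EE_2>0$ is only needed to prevent the equivalence from being vacuous.

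For (ii), the plan is to use the fact that defect operators of contractions distribute over orthogonal sums: $D_{T_2'\oplus T_2''}=D_{T_2'}\oplus D_{T_2''}$ and likewise $D_{(T_1'\oplus T_1'')^*}=D_{(T_1')^*}\oplus D_{(T_1'')^*}$. Consequently each of the two subspaces
\[
D_{T_2'\oplus T_2''}(\EE_2'\oplus\EE_2'')=D_{T_2'}\EE_2'\oplus D_{T_2''}\EE_2''
\]
and its analog for $D_{(T_1'\oplus T_1'')^*}$ respects the decomposition $\EE_2=\EE_2'\oplus\EE_2''$, being of the form $M'\oplus M''$ with $M'\subset\EE_2'$, $M''\subset\EE_2''$. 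The intersection of two such subspaces splits componentwise, so its triviality is equivalent to the triviality of both componentwise intersections, which is precisely the conjunction of regularity of $T_2'T_1'$ and regularity of $T_2''T_1''$.
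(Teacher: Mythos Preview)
Your argument is correct and is exactly the routine verification one would expect; the paper itself gives no proof, introducing the lemma only as gathering ``immediate properties of regular factorizations.'' Your unpacking of the defect operators in (i) and the componentwise splitting in (ii) is the natural way to make that immediacy explicit.
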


If $\Theta_1(\lambda):\EE_1\to\EE_2$, $\Theta_2(\lambda):\EE_2\to\EE_3$ are contractively valued analytic functions, then
 the factorization
\[
	\Theta(\lambda)=\Theta_2(\lambda)\Theta_1(\lambda)
\]
 is called regular if  $\Theta_2(e^{it})\Theta_1(e^{it})$ is regular for almost all $t$.

We will use then the next theorem,  proved in Sections~1 and~2 of~\cite[Chapter VII]{NF}.

\begin{theorem}\label{th:inv subspaces - factorizations}
	Suppose $T$ is a c.n.u. contraction with characteristic function $\Theta$. If $\HH'\subset \HH$ is an invariant subspace with respect to $T$ and the decomposition of $T$ with respect to $\HH'\oplus \HH'{}^\perp$ is
	\[
	T= \begin{pmatrix}
	T_1 & X\\ 0 & T_2
	\end{pmatrix},
	\]
	then there is an associated regular factorization
	\begin{equation}\label{eq:factorization}
	\Theta(\lambda)=\Theta_2(\lambda)\Theta_1(\lambda)
	\end{equation}
	of the characteristic function $\Theta$, such that the characteristic function of $T_i$ coincides with the pure part of $\Theta_i$ for $i=1,2$. 
\end{theorem}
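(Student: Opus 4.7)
The plan is to work in the triangular representation, compute the characteristic function of $T$ by expanding the resolvent according to the block structure, and read off the factorization directly.

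First I would set up the decomposition carefully. Writing $T = \begin{pmatrix} T_1 & X \\ 0 & T_2 \end{pmatrix}$ on $\HH'\oplus\HH'^\perp$, contractivity of $T$ forces $T_1$ and $T_2$ to be contractions, and the corner $X$ to admit a canonical parametrization $X = D_{T_1^*} Z D_{T_2}$ for a unique contraction $Z:\DD_{T_2}\to\DD_{T_1^*}$. I would then express $D_T$ and $D_{T^*}$ in block form in terms of $T_1, T_2, Z, D_Z, D_{Z^*}$; this identifies canonical isometric embeddings $\DD_T \hookrightarrow \DD_{T_1}\oplus\DD_Z$ and $\DD_{T^*}\hookrightarrow \DD_{Z^*}\oplus\DD_{T_2^*}$, which will control the coefficient spaces of the two factors.

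Next, I would introduce the intermediate coefficient space $\FF=\DD_{T_1^*}\oplus\DD_{T_2}$ and define contractively valued analytic functions $\Theta_1(\lambda):\DD_T\to\FF$ and $\Theta_2(\lambda):\FF\to\DD_{T^*}$ by computing $\Theta_T(\lambda)=-T+\lambda D_{T^*}(I-\lambda T^*)^{-1}D_T$ directly in the triangular representation. The point is that $T^*$ is block lower-triangular, so the Neumann series for $(I-\lambda T^*)^{-1}$ splits as a product of $(I-\lambda T_1^*)^{-1}$ and $(I-\lambda T_2^*)^{-1}$ glued by the corner; expanding and regrouping the resulting $2\times 2$ block matrix of operator-valued functions gives an algebraic identity of the form $\Theta_T(\lambda)=\Theta_2(\lambda)\Theta_1(\lambda)$, with $\Theta_1,\Theta_2$ read off explicitly from the formula. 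This settles the factorization itself.

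Then I would verify regularity, i.e.\ that $\overline{D_{\Theta_2(e^{it})}\FF}\cap\overline{D_{\Theta_1(e^{it})^*}\FF}=\{0\}$ a.e.\ on $\bbT$. Using the block formulas from the first step, the defect operators of $\Theta_1(e^{it})$ and $\Theta_2(e^{it})$ can be written in terms of those of $T_1, T_2$ and $Z$, and the intersection reduces to a linear-algebra identity that follows from the tightness of the parametrization $X=D_{T_1^*}ZD_{T_2}$. Finally, to match the $\Theta_i$ with the characteristic functions of $T_i$, I would split off the unitary constants: on the summand of $\FF$ that is not reached by the embedding of $\DD_T$ (resp.\ $\DD_{T^*}$), the function $\Theta_1$ (resp.\ $\Theta_2$) acts as a unitary constant, and after removing these the remaining pure parts satisfy the defining formula of the characteristic functions of $T_1$ and $T_2$.

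The main obstacle I expect is bookkeeping for regularity: this condition is very sensitive to the exact embeddings of $\DD_T,\DD_{T^*}$ coming from $Z, D_Z, D_{Z^*}$, and to identify the pure parts correctly one has to track which part of $\FF$ corresponds to the genuine "link" between $T_1$ and $T_2$ and which is absorbed in the unitary constants. Once the embeddings in the first step are set up carefully, both the factorization identity and the pure-part comparison should follow by direct, if lengthy, computation.
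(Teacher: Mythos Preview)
The paper does not prove this statement at all: it is quoted as a known result from Sections~1 and~2 of~\cite[Chapter VII]{NF}, so there is no ``paper's own proof'' to compare against beyond that citation.

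Your outline is nonetheless a correct and self-contained route to the theorem, and it is genuinely different from the argument in~\cite{NF}. Sz.-Nagy and Foias work inside the functional model: they realize $T$ on the model space built from $\Theta$, describe the invariant subspace $\HH'$ geometrically, and read off the factors $\Theta_1,\Theta_2$ and their regularity from the geometry of the model decomposition. You instead stay on the abstract space and use the Schur/Julia parametrization $X=D_{T_1^*}ZD_{T_2}$ of the off-diagonal corner of a $2\times 2$ block contraction, then expand the resolvent in the lower-triangular $T^*$ to obtain the factorization algebraically, with intermediate space $\FF=\DD_{T_1^*}\oplus\DD_{T_2}$. Both approaches land on the same intermediate space and the same identification of the pure parts; the model-theoretic proof makes regularity almost tautological (it is built into the way $\HH'$ sits inside the model), whereas in your approach regularity is the one step that requires real work, exactly as you anticipate. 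Your warning about bookkeeping is apt: the embeddings $\DD_T\hookrightarrow\DD_{T_1}\oplus\DD_Z$ and $\DD_{T^*}\hookrightarrow\DD_{Z^*}\oplus\DD_{T_2^*}$ must be written down precisely (they involve $Z$, not just $D_Z$), and the verification that the a.e.\ boundary defects of $\Theta_1,\Theta_2$ have trivial intersection in $\FF$ is a genuine computation, not a formality. Once those embeddings are fixed, the splitting off of the unitary constants and the identification of the pure parts with $\Theta_{T_1},\Theta_{T_2}$ go as you describe.
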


In general the factors in~\eqref{eq:factorization} may have a constant unitary part, and so do not necessarily coincide with the characteristic functions of $T_1$ and $T_2$, respectively, which are always pure. However, we have the following lemma.

\begin{lemma}\label{le:factorization for n finite}
	With the above notations, suppose $\dim\DD_T=\dim\DD_{T_1}<\infty$. Then in the factorization~\eqref{eq:factorization}   $\Theta_1$ coincides with the characteristic function of $T_1$. 	
\end{lemma}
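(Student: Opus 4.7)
The plan is a finite-dimensional counting argument. By the structure recalled in the excerpt, one can decompose $\EE_1=\EE_1'\oplus\FF$ and $\EE_2=\EE_2'\oplus\FF_*$ so that $\Theta_1(\lambda)=\Theta_1^p(\lambda)\oplus\Xi_0$, where $\Theta_1^p:\EE_1'\to\EE_2'$ is pure and $\Xi_0:\FF\to\FF_*$ is a constant unitary. The goal is to show that $\FF=\{0\}$ (and hence also $\FF_*=\{0\}$), so that $\Theta_1=\Theta_1^p$ is itself pure; by Theorem~\ref{th:inv subspaces - factorizations} this pure part then coincides with the characteristic function $\Theta_{T_1}$, as desired.

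For this I would first identify the dimensions of the relevant spaces. In the factorization furnished by Theorem~\ref{th:inv subspaces - factorizations} the initial space of $\Theta_1$ is the initial space of $\Theta=\Theta_T$, namely $\DD_T$, so $\dim\EE_1=\dim\DD_T$. On the other hand, since $\Theta_1^p$ coincides with $\Theta_{T_1}:\DD_{T_1}\to\DD_{T_1^*}$ and since coinciding operator-valued analytic functions have unitarily equivalent initial spaces, $\dim\EE_1'=\dim\DD_{T_1}$.

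The hypothesis $\dim\DD_T=\dim\DD_{T_1}<\infty$ combined with $\EE_1=\EE_1'\oplus\FF$ then forces, in finite dimension, $\dim\FF=0$. Hence $\FF=\{0\}$, and the existence of the unitary $\Xi_0:\FF\to\FF_*$ gives $\FF_*=\{0\}$ as well. Consequently $\Theta_1=\Theta_1^p$, and this coincides with $\Theta_{T_1}$, completing the proof. The argument is essentially bookkeeping; there is no serious obstacle, but the finite-dimensionality assumption is manifestly indispensable, since in the infinite-dimensional setting the equality of cardinalities $\dim\EE_1=\dim\EE_1'$ does not preclude a nontrivial complementary summand $\FF$.
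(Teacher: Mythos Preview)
Your argument is correct and is essentially the same as the paper's: both observe that the domain of $\Theta_1$ equals $\DD_T$, that the domain of its pure part has dimension $\dim\DD_{T_1}$, and that the hypothesis $\dim\DD_T=\dim\DD_{T_1}<\infty$ leaves no room for a nontrivial constant unitary summand. The paper phrases this as a contradiction while you phrase it as a direct dimension count, but the content is identical.
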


\begin{proof}

	From the factorization~\eqref{eq:factorization} obtained in Theorem~\ref{th:inv subspaces - factorizations} it follows that the domains of definition of $\Theta(\lambda)$ and $\Theta_1(\lambda)$ coincide, and so have both dimension $\dim\DD_T<\infty$. If $\Theta_1$ had a constant unitary part, then the domain of definition of its pure part would have  strictly smaller dimension. But this pure part coincides with the characteristic function of $T_1$, whose domain $\DD_{T_1}$ has dimension  $\dim\DD_{T_1}=\dim\DD_T$. The contradiction obtained shows that $\Theta_1$ is pure, and so it coincides with the characteristic function of $T_1$.
\end{proof}

\section{Preorder relations and the main problem}\label{se:main problem}

We define two preorder relations on  contractions $A\in\LL(\HH_A)$, $B\in\LL(\HH_B)$ on Hilbert spaces: 
\begin{itemize}
	\item[(i)] $A\Prec B$ if there exists a subspace $\YY\subset\HH_B$, invariant with respect to $B$, such that $A$ is unitarily equivalent to $B|\YY$.
	\item[(ii)] $A\Red B$ if there exists a subspace $\YY\subset\HH_B$, reducing  $B$, such that $A$ is unitarily equivalent to $B|\YY$.
\end{itemize}
One denotes the associated equivalence relations by $\approx$, respectively $\sim$ (so, for instance, $A\approx B$ if and only if $A\Prec B$ and $B\Prec A$).

An equivalent definition of $A\Prec B$ is to state that there exists an isometric map $\Omega:\HH_A\to \HH_B$ such that 
\begin{equation}\label{eq:Omega A=B Omega}
	\Omega A=B\Omega.
\end{equation}
We will also say that $\Omega$ \emph{implements} the relation $\Prec$.
Note that this relation coincides, for partial isometries with equal deficiency indices, with $\Prec$ defined in~\cite[Definition 7.2]{GMR} (although the formulation is slightly different).

From~\eqref{eq:Omega A=B Omega} it follows that $\Omega\HH_A$ is invariant to $B$. Definition (ii) admits a similar reformulation: $A\Red B$ if and only if there exists an isometric map $\Omega:\HH_A\to \HH_B$ such that $\Omega\HH_A$ reduces $B$ and~\eqref{eq:Omega A=B Omega} is satisfied.


It is obvious that 
\begin{center}
	$A$ and $B$  unitarily equivalent $\implies$ $A\sim B$ $\implies$ $A\approx B$. 
\end{center}
We are interested to determine whether  these implications can  be reversed. It turns out that the answer may be easily obtained for the first implication.

\begin{theorem}\label{th:solution for sim}
	If $A\sim B$, then $A$ and $B$ are unitarily equivalent.
\end{theorem}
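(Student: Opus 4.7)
The plan is a Cantor--Schr\"oder--Bernstein style argument adapted to contractions, exploiting that $\Red$ is implemented by \emph{reducing} subspaces, so the resulting unitary equivalences become honest orthogonal direct sum decompositions of the underlying operators.

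First I would unpack the hypothesis. Since $A\sim B$, one has both $A\Red B$ and $B\Red A$; hence there exist reducing subspaces $\YY_1\subset\HH_B$ and $\YY_2\subset\HH_A$ with $B|\YY_1$ unitarily equivalent to $A$ and $A|\YY_2$ unitarily equivalent to $B$. Setting $B_2:=B|(\HH_B\ominus\YY_1)$ and $A_2:=A|(\HH_A\ominus\YY_2)$ we obtain unitary equivalences
\[
B\cong A\oplus B_2,\qquad A\cong B\oplus A_2.
\]
Substituting one into the other yields $A\cong A\oplus E$, where $E:=B_2\oplus A_2$ (and symmetrically $B\cong B\oplus E$).

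The heart of the proof is to upgrade the identity $A\cong A\oplus E$ to
\[
A\cong A_\infty\oplus E^{\oplus\infty},
\]
with $E^{\oplus\infty}$ interpreted as a genuine contraction on a countable orthogonal direct sum. Starting from the previous equivalence, I would construct inside $\HH_A$ a decreasing chain of reducing subspaces $\HH_A=\HH_0\supset\HH_1\supset\HH_2\supset\cdots$ by repeatedly peeling off a reducing orthogonal summand isomorphic to $E$, using at each step that $A|\HH_n$ is again unitarily equivalent to $A$. Writing $F_n:=\HH_n\ominus\HH_{n+1}$ and $\HH_\infty:=\bigcap_n\HH_n$, one checks that $\HH_\infty$ still reduces $A$ and that $\HH_A=\HH_\infty\oplus\bigoplus_n F_n$ is an orthogonal decomposition into reducing subspaces with each $A|F_n\cong E$; this yields the stated equivalence with $A_\infty:=A|\HH_\infty$.

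The final step is a simple absorption identity: since $E=B_2\oplus A_2$, regrouping gives $E^{\oplus\infty}\cong B_2^{\oplus\infty}\oplus A_2^{\oplus\infty}$, so $E^{\oplus\infty}\oplus B_2\cong E^{\oplus\infty}$. Combining everything,
\[
B\cong A\oplus B_2\cong A_\infty\oplus E^{\oplus\infty}\oplus B_2\cong A_\infty\oplus E^{\oplus\infty}\cong A,
\]
which is the desired conclusion. The main obstacle is the iteration step: one must verify that the subspaces $\HH_n$ are genuinely realized inside $\HH_A$ (rather than in an abstractly isomorphic space) and that $\HH_\infty$ still reduces $A$, so that $E^{\oplus\infty}$ truly appears as a reducing summand of $A$ and the manipulations above are honest unitary equivalences rather than formal cancellations.
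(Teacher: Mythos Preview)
Your argument is correct. Both the paper and you invoke a Cantor--Bernstein style idea, but the implementations differ. The paper works directly at the level of the reducing-subspace lattice: it defines the monotone map $\Psi(\YY)=\HH_A\ominus \Omega'(\HH_B\ominus \Omega\YY)$ on reducing subspaces of $A$, applies the Knaster--Tarski fixed-point theorem to obtain a reducing subspace $\YY_0$ with $\Psi(\YY_0)=\YY_0$, and then patches $\Omega|\YY_0$ with $\Omega'{}^*|(\HH_A\ominus\YY_0)$ into a single unitary intertwiner. Your route is the Pe{\l}czy\'nski decomposition trick: from $A\cong A\oplus E$ you iterate to exhibit $E^{\oplus\infty}$ as a genuine reducing summand of $A$, and then absorb the extra $B_2$ via the Hilbert-hotel identity $E^{\oplus\infty}\oplus B_2\cong E^{\oplus\infty}$. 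The paper's proof is shorter and yields an explicit intertwining unitary in one stroke; yours is more constructive in that it actually displays the infinite summand, and it avoids any appeal to an abstract fixed-point theorem. The point you flag as the main obstacle---that the $\HH_n$ live inside $\HH_A$ and that $\HH_\infty=\bigcap_n\HH_n$ still reduces $A$ with $\HH_A=\HH_\infty\oplus\bigoplus_n F_n$---is indeed the only place requiring care, and it goes through exactly as you describe.
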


\begin{proof}
	The proof is done by an analogue of the classical Cantor--Bernstein argument, so we will be sketchy with the details. Suppose $\Omega:\HH_A\to \HH_B$ is an isometry such that $\Omega\HH_A$ reduces $B$ and $\Omega A=B\Omega$; obviously $\HH_B\ominus \Omega\HH_A$ is also reducing for $B$. Moreover, if the subspace $\YY\subset\HH_A$ is reducing for $A$, then $\Omega\YY$ is reducing for $B$.
	Similarly, let 	
	$\Omega':\HH_B\to\HH_A$ be an isometry such that $\Omega'\HH_B$ as well as $\HH_A\ominus \Omega'\HH_B$ reduce $A$ and $\Omega'B=A\Omega'$.

	Define then the map $\Psi$ by 
	\[
	\Psi(\YY)=\HH_A\ominus \Omega'(\HH_B\ominus \Omega(\YY)).
	\]
	Then $\Psi$ maps the complete lattice of subspaces of $\HH_A$ which are reducing with respect to~$A$ to itself, and it is monotone. By the Knaster--Tarski Theorem~\cite{T} it has therefore a fixed point $\YY_0$. If we define the  operator $W:\HH_A\to\HH_B$ by $W|\YY_0=\Omega$ and $W|\HH_A\ominus \YY_0=\Omega'{}^*$, then $W$ is a unitary operator and $W A=BW$.
\end{proof}

A simple example shows that the analogue of Theorem~\ref{th:solution for sim} for $\approx$ is not true in general. 

\begin{example}\label{ex:basic example} Denote by $S,Z$ the unilateral and bilateral shifts on the spaces $\ell^2_\bbN$ and $\ell^2_\bbZ$ respectively. 
Take $A=\oplus_{k=1}^\infty Z$, $B=A\oplus S$. Obviously $A\Red B$,  so $A\Prec B$. But it is also easy to show that $B\Prec A$, since $B$ is unitarily equivalent to the restriction of $A$ to the invariant subspace $\ell^2_\bbN\oplus \bigoplus_{k=2}^\infty \ell^2_\bbZ$. 
	
	Therefore $A\approx B$; on the other hand, they are not unitarily equivalent, since $A$ is unitary while $B$ is not.
	
\end{example}

 Therefore, in the case of $\approx$ we will be interested in classes of contractions for which the following is true:
 
 \begin{center}
 	($*$)\quad If $A\approx B$, then $A$ and $B$ are unitarily equivalent. 
 \end{center}

 We will see that in general we must suppose  a certain type of finite multiplicity.

\section{Completely nonunitary contractions}\label{se: n finite}

We  need some preliminaries concerning singular values of compact operators. For a compact operator $X$, we will denote by $\sigma_k(X)$  the singular values of $X$, arranged in decreasing order. First we state a classical inequality due to Horn~\cite{Ho}. 

\begin{lemma}\label{le:horn-lidski}
	If $X:\EE\to\FF$, $Y:\FF\to\GG$ are compact operators, then for any $k\ge 1$ we have
	\[
	\prod_{t=1}^{k} \sigma_t(YX)\le  \prod_{t=1}^{k} \sigma_t(Y)\prod_{t=1}^{k} \sigma_t(X).
	\]
\end{lemma}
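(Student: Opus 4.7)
The plan is to derive the inequality from the submultiplicativity of the operator norm applied to the $k$-th exterior powers of $X$ and $Y$. For a compact operator $T:\HH_1\to\HH_2$ between Hilbert spaces, let $\Lambda^k T:\Lambda^k\HH_1\to\Lambda^k\HH_2$ denote the natural map determined on decomposable tensors by
\[
\Lambda^k T(x_1\wedge\cdots\wedge x_k)=Tx_1\wedge\cdots\wedge Tx_k,
\]
where $\Lambda^k\HH_i$ is equipped with the Hilbert inner product $\<x_1\wedge\cdots\wedge x_k,\,y_1\wedge\cdots\wedge y_k\>=\det(\<x_i,y_j\>)$. Since $T$ is compact, so is $\Lambda^k T$.

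The first key step is to establish the identity $\|\Lambda^k T\|=\prod_{t=1}^{k}\sigma_t(T)$. Using the Schmidt expansion $T=\sum_j\sigma_j(T)\<\cdot,e_j\>f_j$ with orthonormal systems $(e_j)$, $(f_j)$, the vectors $e_{i_1}\wedge\cdots\wedge e_{i_k}$ (with $i_1<\cdots<i_k$) form an orthonormal basis of $\Lambda^k\HH_1$ on which $\Lambda^k T$ acts as a diagonal operator sending this basis vector to $\sigma_{i_1}(T)\cdots\sigma_{i_k}(T)\,f_{i_1}\wedge\cdots\wedge f_{i_k}$. The largest singular value of $\Lambda^k T$ is therefore $\sigma_1(T)\cdots\sigma_k(T)$, which is its operator norm. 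The second step is the functoriality $\Lambda^k(YX)=(\Lambda^k Y)(\Lambda^k X)$, immediate from the definition on decomposable tensors and extending by linearity and continuity. Combining the two,
\[
\prod_{t=1}^{k}\sigma_t(YX)=\|\Lambda^k(YX)\|\le\|\Lambda^k Y\|\cdot\|\Lambda^k X\|=\prod_{t=1}^{k}\sigma_t(Y)\,\prod_{t=1}^{k}\sigma_t(X).
\]

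The main technical point is the SVD-based identification $\|\Lambda^k T\|=\prod_{t=1}^k\sigma_t(T)$; everything else is either a formal property of the exterior power functor or the one-line submultiplicativity $\|AB\|\le\|A\|\|B\|$. An equivalent route would be to work directly with the variational characterization
\[
\prod_{t=1}^k\sigma_t(T)=\sup\bigl\{|\det(\<Te_i,f_j\>)_{i,j=1}^k|:(e_i),(f_j)\text{ orthonormal}\bigr\},
\]
combined with the Cauchy–Binet formula applied to the matrix representation of $YX$, but that argument is essentially a coordinate rewriting of the exterior-power proof and requires the same underlying determinantal identity.
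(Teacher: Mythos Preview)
Your argument is correct and is the standard modern proof of Horn's inequality via exterior powers. The paper itself does not prove this lemma; it merely states it as a classical result and refers to Horn's 1950 paper, so there is no ``paper's own proof'' to compare against.

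One small point worth tightening: you write that the wedges $e_{i_1}\wedge\cdots\wedge e_{i_k}$ form an orthonormal basis of $\Lambda^k\HH_1$, but the Schmidt vectors $(e_j)$ coming from the singular value decomposition are in general only an orthonormal \emph{system} spanning $(\ker T)^\perp$, not a basis of all of $\HH_1$. The repair is immediate: complete $(e_j)$ to an orthonormal basis of $\HH_1$; the added vectors lie in $\ker T$, so any wedge involving one of them is annihilated by $\Lambda^k T$, and the identification $\|\Lambda^k T\|=\prod_{t=1}^k\sigma_t(T)$ goes through unchanged.
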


In particular, the lemma is valid for finite rank operators which is the case of interest to us.

\begin{corollary}\label{co:consequence of horn}
		If $X:\EE\to\FF$ has finite rank and $Y:\FF\to\GG$ is a contraction, then for any $k\ge 1$ we have
		\[
		\prod_{t=1}^{k} \sigma_t(YX)\le  \prod_{t=1}^{k} \sigma_t(X).
		\]
\end{corollary}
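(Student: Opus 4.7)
The obstacle is that Horn's inequality as stated in Lemma~\ref{le:horn-lidski} requires both factors to be compact, but here $Y$ is only assumed to be a contraction. The plan is to replace $Y$ by a compact operator that agrees with $Y$ on the finite-dimensional subspace where the product $YX$ actually lives, without changing the product.

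Concretely, let $P$ denote the orthogonal projection of $\FF$ onto the (finite-dimensional) closure of $\Range(X)$. Since $\Range(X)\subset \Range(P)$, we have $PX=X$, and therefore
\[
YX = Y(PX) = (YP)X.
\]
The operator $YP:\FF\to\GG$ has finite rank (because $P$ does), hence is compact, and satisfies $\|YP\|\le \|Y\|\,\|P\|\le 1$. In particular, every singular value of $YP$ is bounded by its operator norm, so $\sigma_t(YP)\le 1$ for all $t\ge 1$.

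Now Lemma~\ref{le:horn-lidski} applies to the pair of compact operators $YP$ and $X$, giving
\[
\prod_{t=1}^{k}\sigma_t(YX)=\prod_{t=1}^{k}\sigma_t((YP)X)\le \prod_{t=1}^{k}\sigma_t(YP)\prod_{t=1}^{k}\sigma_t(X)\le \prod_{t=1}^{k}\sigma_t(X),
\]
which is the claimed inequality. The only step requiring any thought is the reduction to a compact $Y$; once the projection trick collapses the problem to a finite-rank setting, the remaining computation is immediate from Horn's inequality and the contractivity of $Y$.
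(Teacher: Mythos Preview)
Your proof is correct and follows essentially the same approach as the paper: both reduce to the finite-rank setting by cutting $Y$ down to the range of $X$ (the paper writes $Y|\RR(X)$, you use $YP$ with $P$ the projection onto $\overline{\RR(X)}$), then apply Horn's inequality together with the fact that the resulting operator is still a contraction.
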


\begin{proof}
	The result is obtained by applying Lemma~\ref{le:horn-lidski} to the finite rank operators $X$ and $Y|\RR(X)$.
\end{proof}

\begin{lemma}\label{le:lema cu minimax}
	Suppose that $X:\EE\to\FF$ has finite rank, $Y:\FF\to\GG$ is a  contraction, and $X$ and $YX$ have the same singular values. Then $\Range(X) \subset \DD_Y^\perp$.
\end{lemma}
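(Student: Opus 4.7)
The plan is to translate the hypothesis on singular values into an equality of Hilbert--Schmidt norms, and then read off $D_Y X = 0$ from a single trace identity; this seems cleaner than manipulating Horn's inequality directly.

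Because $X$ has finite rank, both $X$ and $YX$ are Hilbert--Schmidt, with
\[
\|X\|_{\mathrm{HS}}^2=\sum_{t\ge 1}\sigma_t(X)^2 \quad\text{and}\quad \|YX\|_{\mathrm{HS}}^2=\sum_{t\ge 1}\sigma_t(YX)^2.
\]
The hypothesis that $X$ and $YX$ have the same singular values therefore yields $\|X\|_{\mathrm{HS}}=\|YX\|_{\mathrm{HS}}$. Next, using $I-Y^*Y=D_Y^2$ and cyclicity of the trace, I would compute
\[
\|X\|_{\mathrm{HS}}^2-\|YX\|_{\mathrm{HS}}^2 = \operatorname{tr}\bigl(X^*(I-Y^*Y)X\bigr) = \operatorname{tr}\bigl((D_Y X)^*(D_Y X)\bigr) = \|D_Y X\|_{\mathrm{HS}}^2.
\]
Combining the two displays forces $D_Y X=0$, and hence $\Range(X)\subset\ker D_Y = \DD_Y^\perp$, which is exactly the desired conclusion.

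The only delicate point is to justify these trace manipulations when $\FF$ or $\GG$ is infinite-dimensional, but this is harmless: the finite rank of $X$ lets one restrict everything to the finite-dimensional subspaces $\overline{\Range(X^*)}\subset\EE$ and $\Range(X)\subset\FF$, where all the relevant operators reduce to matrices and cyclicity of the trace is immediate. If one prefers to stay inside the singular-value framework, an alternative would be to use the standard inequality $\sigma_t(YX)\le\|Y\|\sigma_t(X)\le\sigma_t(X)$ (combined with Corollary~\ref{co:consequence of horn}) to force equality factor-by-factor, then write out an SVD $X=\sum_t\sigma_t(X)\langle\cdot,e_t\rangle f_t$ and compare the Gram matrices of $\{f_t\}$ and $\{Yf_t\}$; this shows that $Y$ acts isometrically on $\Range(X)=\overline{\mathrm{span}}\{f_t\}$, again giving the conclusion.
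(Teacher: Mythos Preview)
Your argument is correct, and it is genuinely different from the paper's. The paper restricts to $\tilde X=X|(\ker X)^\perp$ and $\tilde Y=Y|\Range(X)$, applies Horn's inequality (Lemma~\ref{le:horn-lidski}) to the product $\tilde Y\tilde X$ at the top index $k=r=\rank X$, and reads off $\prod_{t=1}^r\sigma_t(\tilde Y)=1$; since $\tilde Y$ is a contraction between $r$-dimensional spaces, this forces $\tilde Y$ to be unitary, hence $Y|\Range(X)$ isometric. Your route bypasses Horn entirely: equality of singular values gives equality of Hilbert--Schmidt norms, and the identity $\|X\|_{\mathrm{HS}}^2-\|YX\|_{\mathrm{HS}}^2=\|D_YX\|_{\mathrm{HS}}^2$ immediately yields $D_YX=0$. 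This is shorter and more elementary, and it isolates the conclusion $\Range(X)\subset\ker D_Y$ without any appeal to products of singular values. The paper's approach has the virtue of thematic consistency, since Horn's inequality is already the engine behind Corollary~\ref{co:consequence of horn} and the key estimates in the proof of Theorem~\ref{th:general n fnite}; your approach shows that for this particular lemma the multiplicative machinery is not actually needed.
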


\begin{proof}
	Let $r=\rank X=\rank YX$.  If $\tilde X=X|(\ker X)^\perp\to \Range(X)$ and $\tilde Y=Y|\Range(X)\to \Range(YX)$, then $\tilde{X}$ and $\tilde{Y}\tilde{X}$ are invertible operators between  spaces of dimension $r$, with nonzero singular values coinciding with those of $X$, respectively $YX$. Lemma~\ref{le:horn-lidski} gives
	\[
	\prod_{t=1}^{r} \sigma_t(\tilde Y\tilde X)\le \prod_{t=1}^{r} \sigma_t(\tilde Y) \prod_{t=1}^{r} \sigma_t(\tilde X).
	\]
	The hypothesis implies $\prod_{t=1}^{r} \sigma_t(\tilde Y)=1$; since $\tilde{Y}$ is a contraction, it must be unitary. But $\tilde Y=Y|\Range(X)$; so $Y|\Range(X)$ is isometric, which is equivalent to $\Range(X) \subset \DD_Y^\perp$.
\end{proof}

Our main result for c.n.u. contractions is the following theorem.

\begin{theorem}\label{th:general n fnite}
		Suppose $A, B$ are two c.n.u. contractions with $\min\{\dim\DD_A, \dim\DD_B\}<\infty$. If  $A\approx B$, then $A$ and $B$ are unitarily equivalent.
	\end{theorem}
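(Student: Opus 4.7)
The plan is to translate $A\approx B$ into a pair of regular factorizations of the characteristic functions via Theorem~\ref{th:inv subspaces - factorizations}, use the singular value machinery of Section~\ref{se: n finite} to force the outer factors to be inner, and then show that these outer factors must in fact be unitary constants; once that is established, $\Theta_A$ and $\Theta_B$ coincide and the Sz.-Nagy--Foias theory delivers the unitary equivalence of $A$ and $B$.

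Lemma~\ref{le:dimensions defects} applied in both directions of $A\approx B$ forces $\dim\DD_A=\dim\DD_B$, and the hypothesis makes this common value $n$ finite. Theorem~\ref{th:inv subspaces - factorizations} applied to $A\Prec B$ then yields a regular factorization $\Theta_B=\Theta_{B,2}\Theta_{B,1}$, and Lemma~\ref{le:factorization for n finite} upgrades $\Theta_{B,1}$ to coincide with $\Theta_A$. Symmetrically, $B\Prec A$ gives a regular factorization $\Theta_A=\Theta_{A,2}\Theta_{A,1}$ with $\Theta_{A,1}$ coinciding with $\Theta_B$. Corollary~\ref{co:consequence of horn} applied to either factorization yields $\prod_{t=1}^k\sigma_t(\Theta_B(\lambda))\le\prod_{t=1}^k\sigma_t(\Theta_A(\lambda))$, and the reverse inequality follows by symmetry, so the two characteristic functions have the same singular values at every $\lambda\in\bbD$. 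Lemma~\ref{le:lema cu minimax} then gives that $\Theta_{B,2}(\lambda)$ is isometric on $\Range\Theta_{B,1}(\lambda)$ for every $\lambda$. On the boundary this reads $\DD_{\Theta_{B,2}(e^{it})}\subset\ker\Theta_{B,1}(e^{it})^*$, on which kernel $D_{\Theta_{B,1}(e^{it})^*}$ acts as the identity; hence $\Range D_{\Theta_{B,2}(e^{it})}\subset\Range D_{\Theta_{B,1}(e^{it})^*}$, and the regularity of the factorization forces $D_{\Theta_{B,2}(e^{it})}=0$. Thus $\Theta_{B,2}$ is inner, and the same argument shows $\Theta_{A,2}$ is inner.

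Chaining the two factorizations produces
\[
\Theta_B(\lambda)=\Phi(\lambda)\,\Theta_B(\lambda)\,\omega,
\]
where $\Phi=\Theta_{B,2}\cdot u\cdot\Theta_{A,2}\cdot v$ is inner from $\DD_{B^*}$ to itself (with $u,v$ the unitaries implementing the coincidences) and $\omega$ is a unitary constant on $\DD_B$. Iterating, $\Theta_B H^2(\DD_B)\subset\bigcap_{k\ge 1}\Phi^kH^2(\DD_{B^*})$, and the latter intersection is $\{0\}$ for any inner $\Phi$ which is not a unitary constant. Hence, modulo the exceptional case $\Theta_B\equiv 0$ (in which $B$ is a unilateral or backward shift and the conclusion follows from multiplicity theory), $\Phi$ itself must be a unitary constant. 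Writing $\Phi=M_1M_2$ with $M_1=\Theta_{B,2}$ and $M_2$ the corresponding twist of $\Theta_{A,2}$, both inner, $M_1$ admits a contractive analytic right inverse $R$; the analytic idempotent $RM_1$ is then contractive with isometric boundary values, and therefore identically the identity. Hence $M_1$ is invertible with contractive analytic inverse, i.e.\ unitary at every point, and is forced to be a constant unitary by the operator-valued Schwarz lemma. Consequently $\Theta_{B,2}$ and $\Theta_{A,2}$ are constant unitaries, $\Theta_B$ coincides with $\Theta_A$, and $A$ is unitarily equivalent to $B$.

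The chief obstacle is this final collapse: while the opening reductions (equal defects, sharp factorizations, innerness of the outer factors) follow mechanically from the tools assembled in Sections~\ref{se:prelim} and~\ref{se: n finite}, extracting the unitary-constant conclusion from the iterated relation $\Theta_B=\Phi\Theta_B\omega$ requires both the classical vanishing $\bigcap_k\Phi^kH^2=\{0\}$ and the operator-valued Schwarz rigidity---the latter needing careful handling when the codefect spaces $\DD_{A^*},\DD_{B^*}$ are infinite-dimensional.
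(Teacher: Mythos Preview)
Your opening reductions---equal defects, the two regular factorizations with first factors coinciding with $\Theta_A,\Theta_B$, and the equality of singular values at every $\lambda\in\bbD$---match the paper exactly. Your passage to the boundary plus the regularity condition to conclude that $\Theta_{B,2}$ and $\Theta_{A,2}$ are inner is also correct, and is in fact slightly sharper than what the paper states.

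The gap is in the endgame. The assertion that $\bigcap_{k\ge 1}\Phi^k H^2(\DD_{B^*})=\{0\}$ whenever the inner function $\Phi$ is not a unitary constant is false: take $\Phi(\lambda)=1\oplus\lambda$ on $\bbC^2$, for which the intersection is $H^2\oplus\{0\}$. The vanishing holds only for the \emph{pure} part of $\Phi$, and in the present situation $\Phi$ will typically have a nontrivial constant unitary summand. Indeed, Lemma~\ref{le:lema cu minimax} applied at interior points already gives $\Range\Theta_A(\lambda)\subset\DD_{\Theta_{B,2}(\lambda)}^\perp$, and the right-hand side is precisely the constant-unitary subspace $\DD_{A^*}^u$ in the decomposition $\Theta_{B,2}=\Theta_{B,2}^p\oplus W$; so the iterated relation $\Theta_B=\Phi^k\Theta_B\omega^k$ only traps $\Theta_B H^2(\DD_B)$ inside the $H^2$ over that unitary-constant block, and $\Phi$ itself need not be a unitary constant on all of $\DD_{B^*}$. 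Consequently your splitting argument (right inverse, idempotent, Schwarz rigidity) cannot be applied to $\Theta_{B,2}$ globally, only to its restriction to $\DD_{A^*}'=\bigvee_\lambda\Range\Theta_A(\lambda)$.

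This is exactly how the paper proceeds: it reads off from Lemma~\ref{le:lema cu minimax} that $\DD_{A^*}'\subset\DD_{A^*}^u$, so that the constant unitary $W$ already carries $\DD_{A^*}'$ onto $\DD_{B^*}'$, giving $\Theta_B=W\Theta_A\omega$ on the ranges without any iteration. What then remains---and what your argument does not address---is the matching of the complementary pieces $\DD_{A^*}''$ and $\DD_{B^*}''$; the paper extends $W$ by an arbitrary unitary when their dimensions agree, and uses Lemma~\ref{le:regular properties} together with the regularity of the factorization to rule out the possibility $\dim\DD_{A^*}''>\dim\DD_{B^*}''$ (which would force the pure part $\Theta_{B,2}^p$ to be an isometry between spaces of incompatible dimensions). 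Your route, even after repairing the intersection claim, would still have to supply this final dimension comparison.
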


\begin{proof}

By Lemma~\ref{le:dimensions defects}, $n:=\dim\DD_A=\dim\DD_B<\infty$. 
We will denote
\[
\begin{split}
\DD_{A^*}'&=\bigvee_{\lambda\in\bbD}\Theta_A(\lambda)\DD_A,\quad \DD_{A^*}''=\DD_{A^*}\ominus \DD_{A^*}',\\
\DD_{B^*}'&=\bigvee_{\lambda\in\bbD}\Theta_B(\lambda)\DD_B,\quad \DD_{B^*}''=\DD_{B^*}\ominus \DD_{B^*}'.
\end{split}
\] 
The statement of the theorem is symmetric in $A$ and $B$, so we will assume in the sequel that $\dim\DD_{A^*}''\ge\dim\DD_{B^*}''$.

  Since $A\Prec B$ means that $A$ is unitarily equivalent to the restriction of $B$ to an invariant subspace, Lemma~\ref{le:factorization for n finite} implies that  there are contractively valued analytic functions  $\Theta_1(\lambda), \Theta_2(\lambda)$, such that $\Theta_1(\lambda)$ coincides with $\Theta_A(\lambda)$, and 	$\Theta_B(\lambda)=\Theta_2(\lambda)\Theta_1(\lambda)$ is a regular factorization. Therefore there  
  are unitary operators $\omega, \omega_*$ such that
	$\Theta_B(\lambda)=\Theta_2(\lambda)\omega_*\Theta_A(\lambda)\omega$; note, in particular, that $\omega:\DD_B\to\DD_A$.
	
	So $\Theta_B(\lambda)\omega^*=\Theta_2(\lambda)\omega_*\Theta_A(\lambda)$, and, if we denote $\Theta'_B(\lambda)=\Theta_B(\lambda)\omega^* $ and $\Theta_3(\lambda)=\Theta_2(\lambda)\omega_* $, then 
	\begin{equation}\label{eq:basic}
	\Theta'_B(\lambda)=\Theta_3(\lambda)\Theta_A(\lambda).
	\end{equation}
	Remember that here $\Theta_A(\lambda):\DD_A\to \DD_{A^*}$, $\Theta_3(\lambda): \DD_{A^*}\to \DD_{B^*}$, $\Theta'_B(\lambda):\DD_A\to\DD_{B^*}$ coincides with $\Theta_B(\lambda)$, and the factorization~\eqref{eq:basic} is easily checked to be also regular.

	As noticed in Section~\ref{se:prelim}, it may happen that $\Theta_3$ is not pure, so we  write $\DD_{A^*}=\DD_{A^*}^p\oplus \DD_{A^*}^u$, $\DD_{B^*}=\DD_{B^*}^p\oplus \DD_{B^*}^u$, such that with respect to these decompositions we have $\Theta_3(\lambda)=\Theta_3^p(\lambda)\oplus W$, with $\Theta_3^p(\lambda)$ pure and $W$ unitary and constant.

	Fix $\lambda\in\bbD$.
	Applying Lemma~\ref{le:horn-lidski}, we obtain that for any $k=1,\dots, n$ we have 
		\[
		\prod_{t=1}^{k} \sigma_t(\Theta_B'(\lambda))\le \prod_{t=1}^{k} \sigma_t(\Theta_3(\lambda)) \prod_{t=1}^{k} \sigma_t(\Theta_A(\lambda)).
		\]
Since $\Theta_3(\lambda)$ is a contraction, the first product is at most 1, and thus	
\[
\prod_{t=1}^{k} \sigma_t(\Theta_B(\lambda))
=\prod_{t=1}^{k} \sigma_t(\Theta_B'(\lambda))
\le\prod_{t=1}^{k} \sigma_t(\Theta_A(\lambda)).
\]
A similar argument, 
using the opposite relation $B\Prec A$, leads to the reverse inequality, so in the end  we  obtain  that
\begin{equation}\label{eq:product equalities}
\prod_{t=1}^{k} \sigma_t(\Theta_B(\lambda))=\prod_{t=1}^{k} \sigma_t(\Theta_A(\lambda))
\end{equation}
for all $k=1,\dots, n$.

The first conclusion that follows from these relations is that the rank of $\Theta_A(\lambda)$ coincides with the rank of $\Theta_B(\lambda)$ (since it is the largest $k$ for which the product of the first $k$ singular values is nonzero). 
Then, by dividing the equalities~\eqref{eq:product equalities} for successive values of $k$, we obtain that $\Theta_A(\lambda)$ and $\Theta_B(\lambda)$ have the same singular values. The same is then true about $\Theta_A(\lambda)$ and $\Theta'_B(\lambda)$.

Apply then Lemma~\ref{le:lema cu minimax} to the factorization~\eqref{eq:basic}.
It follows  that the range of $\Theta_A(\lambda)$ is contained in the orthogonal of the defect of $\Theta_3(\lambda)$. But this last space is precisely $\DD_{A^*}^u$. Since it does not depend on $\lambda\in\bbD$,  we also have $\DD_{A^*}'\subset \DD^u_{A^*}$, whence $\DD^p_{A^*}\subset\DD_{A^*}'' $. Also, from~\eqref{eq:basic} it follows then that 
\[
\begin{split}
W(\DD_{A^*}')&=W\left( \bigvee_{\lambda\in\bbD}\Theta_A(\lambda)\DD_A \right)= \bigvee_{\lambda\in\bbD}W\Theta_A(\lambda)\DD_A=
\bigvee_{\lambda\in\bbD}\Theta_3(\lambda)\Theta_A(\lambda)\DD_A\\
&=
\bigvee_{\lambda\in\bbD}\Theta'_B(\lambda)\DD_A
=\bigvee_{\lambda\in\bbD}\Theta_B(\lambda)\DD_B
=\DD_{B^*}'.
\end{split}
\]

Suppose $\dim\DD_{A^*}''=\dim\DD_{B^*}''$.  We may then choose an arbitrary unitary operator $W':\DD''_{A^*}\to\DD''_{B^*}$ and define   $\Omega:=W'\oplus W:\DD_{A^*}\to\DD_{B^*}$.  
It follows  from~\eqref{eq:basic} that
\begin{equation*}\label{eq:theta B=omega theta A}
\Theta'_B(\lambda)=\Omega\Theta_A(\lambda), \text{ or } \Theta_B(\lambda)=\Omega\Theta_A(\lambda)\omega.
\end{equation*}
Therefore the characteristic functions $\Theta_A(\lambda)$ and $\Theta_B(\lambda)$ coincide, whence $A$ and $B$ are unitarily equivalent.

Since we have assumed that $\dim \DD_{A^*}''\ge \dim\DD_{B^*}''$, in order
to finish the proof of the theorem it is enough to show that we cannot have
 $\dim\DD_{A^*}''>\dim\DD_{B^*}''$. Suppose then  this is the case; in particular, $\dim\DD_{B^*}^p\le\dim\DD_{B^*}''<\infty$. Moreover, since
\[
W(\DD_{A^*}^u\ominus \DD_{A^*}' )=\DD_{B^*}^u\ominus \DD_{B^*}',
\]
we have $\dim\DD_{A^*}^p>\dim\DD_{B^*}^p $. Now relation~\eqref{eq:basic} translates as  
\begin{equation}\label{eq:basic 2}
\Theta'_B(\lambda)=\big(\Theta_3^p(\lambda)\oplus W\big) \big( 0\oplus \Theta_A^{r}(\lambda) \big),
\end{equation}
where $\Theta_A^{r}(\lambda):\DD_A\to \DD_{A^*}^u$ acts as $\Theta_A(\lambda)$.
By Lemma~\ref{le:regular properties} (ii), the regularity of the factorization~\eqref{eq:basic 2} implies the regularity of the factorization 
$
	\Theta_3^p(\lambda)\cdot 0,
$
where $0$ acts from $\{0\}$ to $\DD_{A^*}^p$.
It follows then from Lemma~\ref{le:regular properties} (i)  
  that almost everywhere $\Theta_3^p(e^{it}):\DD_{A^*}^p\to \DD_{B^*}^p$ is an isometry. But this contradicts the inequality $\dim\DD_{A^*}^p>\dim\DD_{B^*}^p $.
The proof is therefore finished.
\end{proof}

Without the finite defect assumption,  it is not true, even for c.n.u contractions, that $A\approx B$ implies $A$ and $B$ unitarily equivalent, as shown by the counterexample below.
\begin{example}\label{ex:sum of Jordan}
	For $m\in\bbN$, denote by $S_m$ the operator having as matrix the Jordan cell of eigenvalue 0 and dimension precisely~$m$ (in particular, $S_1$ is the zero operator acting on $\bbC$). So, if $H_m$ is a Hilbert space of dimension~$m$ having as basis $e^m_1, \dots, e^m_m$, then $S_me^m_k=e^m_{k+1}$ for $k\le m-1$ and $S_me^m_m=0$.
	
	Define then $A=\bigoplus_{m=1}^\infty S_m$, and $B=\bigoplus_{m=2}^\infty S_m$, acting on $\HH_A=\bigoplus_{m=1}^\infty H_m$ and $\HH_B=\bigoplus_{m=2}^\infty H_m$ respectively.  It is obvious that $B\Prec A$, by the standard embedding of $\HH_B$ into $\HH_A$. 
	
	On the other hand, if we define $\Omega:\HH_A\to \HH_B$
	by $\Omega(e^m_k)=e^{m+1}_{k+1}$, then $\Omega$ is an isometry, $\Omega(\HH_A)$ is invariant with respect to $B$, and $\Omega B=A\Omega$; so $A\Prec B$.
	
	But $A$ and $B$ are not unitarily equivalent, since $H_1$ is a one-dimensional reducing subspace of $A$ contained in $\ker A$, while $\ker B=\bigvee_{m=2}^\infty \bbC e^m_m$ does not contain reducing subspaces. 
	
	It is worth noting that $A$ and $B$ are partial isometries, with both defect spaces of infinite dimension, so they belong to the class $\VV_\infty$ considered in~\cite{GMR}. Moreover, $A,B\in C_{00}$, so their characteristic functions are inner as well as *-inner. This is relevant in connection to~\cite[Section 9]{GMR}. Theorem 9.1 therein says that statement $(*)$ is true if $\Theta_A, \Theta_B$ are inner, with one-dimensional defects, while Theorem 9.2 claims, without including the proof, that the result is true in general. As a corollary of our Theorem~\ref{th:general n fnite}, we see that it is indeed true for finite dimensional defect spaces, while the above example shows that it cannot be extended to the infinite dimensional situation. 
\end{example}

\section{General contractions}\label{se:general}

We will investigate in this section statement $(*)$ for other classes of contractions. The next lemma is  elementary.

\begin{lemma}\label{le:invariant is reducing for unitary}
	If $\YY\subset\HH_B$ is a closed subspace, $B\YY\subset \YY$, and $B|\YY$ is unitary on $\YY$, then $\YY$ is reducing for $B$.
\end{lemma}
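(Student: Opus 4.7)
The plan is to prove that $\YY$ is invariant with respect to $B^*$, which combined with the hypothesis $B\YY\subset\YY$ yields that $\YY$ reduces $B$. The key observation is that if $B|\YY$ is unitary as an operator on $\YY$, then in particular it is surjective onto $\YY$ and isometric there, and isometric action of a contraction forces elements to lie in the kernel of the defect operator.

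Concretely, I would proceed as follows. Fix $y\in\YY$ arbitrary. Since $B|\YY$ is unitary on $\YY$, it is surjective onto $\YY$, so there exists $w\in\YY$ with $Bw=y$ and $\|w\|=\|y\|$. Because $B$ is a contraction satisfying $\|Bw\|=\|w\|$, we have
\[
\langle (I-B^*B)w, w\rangle = \|w\|^2 - \|Bw\|^2 = 0,
\]
and since $I-B^*B\ge 0$, this forces $D_B w=0$, hence $B^*Bw=w$. Therefore
\[
B^*y = B^*Bw = w \in \YY.
\]
As $y\in\YY$ was arbitrary, $B^*\YY\subset\YY$, so $\YY$ reduces $B$.

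There is essentially no obstacle here; the only subtlety is recognizing that the unitarity of $B|\YY$ supplies both the existence of a preimage $w\in\YY$ of $y$ and the norm equality $\|Bw\|=\|w\|$, which is exactly what is needed to invert $B$ on $w$ using $B^*$.
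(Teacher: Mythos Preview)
Your proof is correct. Both your argument and the paper's establish $B^*\YY\subset\YY$, but by slightly different routes. The paper works directly with $B^*$: writing $A=B|\YY$ and $P=P_\YY$, it notes that $A^*=PB^*|\YY$, so unitarity of $A$ gives $\|PB^*x\|=\|x\|$ for $x\in\YY$, while $\|B^*x\|\le\|x\|$ since $B$ is a contraction; equality in $\|PB^*x\|\le\|B^*x\|$ then forces $PB^*x=B^*x$. You instead exploit the surjectivity of $B|\YY$ to write $y=Bw$ with $w\in\YY$, and then use the isometric condition $\|Bw\|=\|w\|$ together with $I-B^*B\ge 0$ to obtain $B^*Bw=w$, so $B^*y=w\in\YY$. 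Your approach computes $B^*y$ explicitly as the inverse image under $B|\YY$, whereas the paper's approach is a norm-comparison argument avoiding any mention of the defect operator; both are equally elementary and of comparable length.
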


\begin{proof}
	If $P$ is the ortogonal projection onto the invariant subspace $\YY$ and $A=B|\YY$, then $A^*=PB^*|\YY$. But if $A$ is unitary, then $\|A^*x\|=\|x\|$ for all $x\in \YY$, so
	\[
	\|x\|= \|A^*x\|= \|PB^*x\|\le \|B^*x\|\le \|x\|.
	\]
	So the inequality is actually an equality, $PB^*x=B^*x$, and $\YY$ is also invariant to $B^*$.
\end{proof}

A few consequences for the preorder  relations are gathered in the next corollary.

\begin{corollary}\label{co:unit cnu} 
	{\rm (i)}
	If $A\Prec B$ and $A$ is a unitary operator, then $A\Red B$. 
	
	{\rm (ii)} If $A\Prec B$, then $A_u\Red B_u$. 
	
	{\rm (iii)}If $A\approx B$, then $A_u\sim B_u$.
\end{corollary}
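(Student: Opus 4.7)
The plan is to handle the three parts of the corollary in sequence, each a short consequence of the previous one; the workhorse throughout is Lemma~\ref{le:invariant is reducing for unitary}, and I see no real obstacle, only one bookkeeping point in part~(ii).

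For part~(i) I would pick an isometry $\Omega\colon\HH_A\to\HH_B$ that implements $A\Prec B$, so that $\Omega\HH_A$ is invariant for $B$ and $B|\Omega\HH_A$ is unitarily equivalent to $A$. Under the hypothesis that $A$ is unitary, $B|\Omega\HH_A$ is unitary, and Lemma~\ref{le:invariant is reducing for unitary} applies directly to show that $\Omega\HH_A$ reduces $B$; this is precisely the conclusion $A\Red B$.

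For part~(ii) I would run the same argument but with $\Omega$ restricted to $\HH_A^u$. Because $\HH_A^u$ is invariant for $A$ and $\Omega A=B\Omega$, the image $\Omega\HH_A^u$ is invariant for $B$, and $B|\Omega\HH_A^u$ is unitarily equivalent to $A_u$, hence unitary. Lemma~\ref{le:invariant is reducing for unitary} then yields that $\Omega\HH_A^u$ reduces $B$. The one small additional step — and the closest thing to an obstacle — is to upgrade ``reduces $B$'' to ``reduces $B_u$''. Here I would invoke the characterization of $\HH_B^u$ as the largest reducing subspace of $B$ on which $B$ acts as a unitary (immediate from the canonical decomposition recalled in Section~\ref{se:prelim}, since otherwise the c.n.u.\ part $B_c$ would contain a non-zero reducing subspace on which it is unitary). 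This forces $\Omega\HH_A^u\subset\HH_B^u$, and once the inclusion lives in $\HH_B^u$, the subspace $\Omega\HH_A^u$ is automatically reducing for $B_u=B|\HH_B^u$ with restriction still unitarily equivalent to $A_u$, giving $A_u\Red B_u$.

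For part~(iii) nothing further is needed: $A\approx B$ unfolds as $A\Prec B$ together with $B\Prec A$, and applying part~(ii) in both directions produces $A_u\Red B_u$ and $B_u\Red A_u$, which is precisely $A_u\sim B_u$.
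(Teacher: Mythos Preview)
Your proposal is correct and follows essentially the same route as the paper: part~(i) is a direct application of Lemma~\ref{le:invariant is reducing for unitary}, part~(ii) is obtained by applying (i) to $A_u\Prec B$ and then observing that any reducing subspace on which $B$ is unitary must lie in $\HH_B^u$, and part~(iii) is the two-sided application of~(ii). The paper's proof is just a terser version of exactly what you wrote.
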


\begin{proof}
(i) follows immediately from Lemma~\ref{le:invariant is reducing for unitary}. For (ii), if $A\Prec B$, then $A_u\Prec B$, and therefore $A_u\Red B$ by (i). Obviously a subspace that reduces $B$ to a unitary operator is contained in $\HH_B^u$, so then $A_u\Red B_u$. Finally, (iii) is an immediate consequence.
\end{proof}

In particular, Corollary~\ref{co:unit cnu} combined with Theorem~\ref{th:solution for sim} show that statement $(*)$ in Section~\ref{se:main problem} is true if $A,B$ are unitary operators. Unitary equivalence of unitary operators is completely described by multiplicity theory; see, for instance,~\cite{Ha}. We will not discuss it further here, but we want to point out a consequence that will be used below.

\begin{lemma}\label{le:unitary multiplicity}
	Suppose $U\in\LL(\HH_U)$ is a unitary operator of finite multiplicity (that is, $U$ is a finite direct sum of cyclic unitary operators). If $\YY\subset\HH_U$ is a reducing subspace for $U$ and $U|\YY$ is unitarily equivalent to $U$, then $\YY=\HH_U$.
\end{lemma}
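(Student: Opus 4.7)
The plan is to use spectral multiplicity theory, which is tailor-made for this situation. Because $U$ has finite multiplicity, it is determined up to unitary equivalence by the pair $([\mu],m)$, where $[\mu]$ is the measure class of a scalar spectral measure on $\mathbb T$ and $m\colon\mathbb T\to\{0,1,\dots,n\}$ is the multiplicity function (well defined modulo $\mu$-null sets), with $\mu(\{m=0\})=0$. The key point, which makes the argument go through, is that $m$ is \emph{finite}-valued.

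The first step is to record the additivity of the multiplicity function under orthogonal decompositions. If $\YY$ reduces $U$, then writing $U=U|\YY\oplus U|\YY^\perp$ and passing to the canonical direct-integral (or orthogonal sum of cyclic) model, both summands have scalar spectral measures absolutely continuous with respect to $\mu$, and
\[
m_U(\lambda)=m_{U|\YY}(\lambda)+m_{U|\YY^\perp}(\lambda)
\]
for $\mu$-almost every $\lambda$, where each summand is extended by $0$ outside the support of its own scalar spectral measure. This is a standard fact from multiplicity theory.

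The second step is to exploit the hypothesis $U|\YY$ is unitarily equivalent to $U$. This forces $[\mu_{\YY}]=[\mu]$ and $m_{U|\YY}=m_U$ $\mu$-almost everywhere. Substituting into the additivity identity yields $m_{U|\YY^\perp}(\lambda)=0$ for $\mu$-almost every $\lambda$. \emph{Here} is where finiteness of $n$ is essential: the cancellation $m_U-m_{U|\YY}=m_{U|\YY^\perp}$ is only meaningful when $m_U(\lambda)<\infty$, and it is precisely the failure of this cancellation in the infinite-multiplicity case that permits counterexamples like Example~\ref{ex:basic example}.

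Finally, since the scalar spectral measure $\mu_{\YY^\perp}$ of $U|\YY^\perp$ is absolutely continuous with respect to $\mu$ and $m_{U|\YY^\perp}$ is strictly positive on the support of $\mu_{\YY^\perp}$ by definition, the vanishing $m_{U|\YY^\perp}=0$ $\mu$-a.e.\ forces $\mu_{\YY^\perp}=0$, hence $\YY^\perp=\{0\}$ and $\YY=\HH_U$. No step is really an obstacle; the main thing to be careful about is disciplined bookkeeping of the three measure classes $[\mu]$, $[\mu_{\YY}]$, $[\mu_{\YY^\perp}]$ and the convention that $m$ is extended by zero off the support, so that the pointwise additivity makes sense globally.
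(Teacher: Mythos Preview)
Your argument is correct, and it is exactly the multiplicity-theory consequence the paper has in mind: the paper does not actually supply a proof of this lemma, but merely states it as a fact from spectral multiplicity theory with a reference to~\cite{Ha}. Your proof fills in precisely the standard details---additivity of the multiplicity function under reducing decompositions, together with the finiteness of $m$ to permit cancellation---so there is nothing to compare.
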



Statement $(*)$ is also true for certain classes of more general contractions.
As in the case of c.n.u. contractions, some finite multiplicity condition is necessary. We start with a lemma.

%

\begin{lemma}\label{le:order extends to dilations}
	Suppose $A\Prec B$, implemented by $\Omega$. If $U,V$ are the minimal unitary dilations of $A, B$ respectively, then $U\Red V$. Moreover, if $V\in\LL(\KK_B)$, we may choose $\KK_A$ and an implementing isometry $\tilde{\Omega}:\KK_A\to \KK_B$ such that $\tilde{\Omega} x=\Omega x$ for all $x\in\HH_A$.
\end{lemma}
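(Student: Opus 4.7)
The plan is to prove the ``moreover'' clause directly, from which $U\Red V$ follows since $\tilde\Omega$ will be an isometry intertwining $U$ and $V$ whose range is reducing for $V$. I fix any realization of the minimal unitary dilation of $A$: a Hilbert space $\KK_A\supset\HH_A$, a unitary $U\in\LL(\KK_A)$ with $A^n=P_{\HH_A}U^n|\HH_A$ for $n\ge 0$, and $\KK_A=\bigvee_{n\in\bbZ}U^n\HH_A$. On the dense linear subspace $\LL:=\mathrm{span}\{U^n x:n\in\bbZ,\ x\in\HH_A\}$ I define
\[
\tilde\Omega\Bigl(\sum_i c_i U^{n_i} x_i\Bigr) := \sum_i c_i V^{n_i}\Omega x_i,
\]
with the intention of extending by continuity.

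The main step is to verify that $\tilde\Omega$ is well-defined and isometric on $\LL$, which reduces to the identity $\langle U^n x, U^m y\rangle_{\KK_A}=\langle V^n\Omega x, V^m\Omega y\rangle_{\KK_B}$ for all $n,m\in\bbZ$ and $x,y\in\HH_A$. Using unitarity of $U$ and $V$, each side depends only on $k:=n-m$. For $k\ge 0$ the dilation property gives $\langle U^k x,y\rangle=\langle P_{\HH_A}U^k x,y\rangle=\langle A^k x,y\rangle$; similarly, using that $\Omega x,\Omega y\in\HH_B$ and iterating $\Omega A=B\Omega$,
\[
\langle V^k\Omega x,\Omega y\rangle=\langle B^k\Omega x,\Omega y\rangle=\langle\Omega A^k x,\Omega y\rangle=\langle A^k x,y\rangle.
\]
For $k<0$ the same argument applies after taking adjoints. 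Hence the inner products agree, so $\tilde\Omega$ extends by continuity to an isometry $\tilde\Omega:\KK_A\to\KK_B$.

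All required properties are now immediate: $\tilde\Omega|_{\HH_A}=\Omega$ from the case $n_i=0$; the intertwining $\tilde\Omega U=V\tilde\Omega$ holds on $\LL$ by construction and extends by density; and the range of $\tilde\Omega$ equals the closure of $\mathrm{span}\{V^n\Omega x:n\in\bbZ, x\in\HH_A\}=\bigvee_{n\in\bbZ}V^n(\Omega\HH_A)$, which is reducing for $V$ since it is closed under both $V$ and $V^*$. Therefore $\tilde\Omega$ implements $U\Red V$ and satisfies $\tilde\Omega|_{\HH_A}=\Omega$. No essential obstacle is anticipated beyond the inner-product bookkeeping of the second paragraph; conceptually, the statement simply says that the minimal unitary dilation of $A$ can be realized inside $\KK_B$ as the reducing subspace of $V$ generated by $\Omega\HH_A$.
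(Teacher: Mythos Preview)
Your proof is correct and follows essentially the same idea as the paper's: both identify the minimal unitary dilation of $A$ with the reducing subspace $\bigvee_{n\in\bbZ}V^n(\Omega\HH_A)$ of $\KK_B$, verifying via the dilation relations and $\Omega A=B\Omega$ that this subspace carries a copy of $U$. The only cosmetic difference is that the paper takes $\KK_A$ to be this subspace itself (so $\tilde\Omega$ is inclusion and one checks $P_{\HH_A}V^n|\HH_A=A^n$), whereas you fix an abstract $\KK_A$ and build $\tilde\Omega$ by the standard inner-product computation; the underlying calculation is the same.
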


\begin{proof} We may suppose that $\HH_A\subset \HH_B$ is a subspace invariant with respect to $B$ and $A=B|\HH_A$; thus
	 $\Omega$ is the embedding of $\HH_A$ into $\HH_B$. If $V\in\LL(\KK_B)$ is the minimal unitary dilation of $B$, then $B^n=P^{\KK_B}_{\HH_B}V^n|\HH_B$ for all $n\in \bbN$. 
	
	Define then $\KK_A=\bigvee_{k=-\infty}^\infty V^k \HH_A$. Then $\KK_A$ reduces $V$; if $x\in\HH_A$, then, for all $n\in\bbN$
	\[
	P^{\KK_A}_{\HH_A}V^n x= P^{\KK_B}_{\HH_A} V^n x=  P^{\HH_B}_{\HH_A} P^{\KK_B}_{\HH_B} V^n x=P^{\HH_B}_{\HH_A} B^n x= A^nx.
	\]
 Therefore $U:=V|\KK_A$ is a minimal unitary dilation of $A$. Obviously $U\Prec V$, which by Corollary~\ref{co:unit cnu} implies $U\Red V$. Moreover, the implementing map $\tilde{\Omega}$ is the inclusion of $\KK_A$ into $\KK_B$. Restricted to $\HH_A$, this becomes  the inclusion of $\HH_A$ into $\KK_B$ (whose image is actually contained in $\HH_B\subset \KK_B$); so $\tilde{\Omega} x=\Omega x$ for all $x\in\HH_A$.
\end{proof}

The next result describes a rather general situation in which statement $(*)$ in Section~\ref{se:main problem} is true. 

\begin{theorem}\label{th:a more general result}
	Suppose $A\approx B$, and $A$ has the following two properties:
	\begin{itemize}
		\item[(i)] The absolutely continuous unitary part of $A$ has finite multiplicity.
		\item[(ii)] $\dim\DD_A<\infty$.
	\end{itemize}
	Then $A$ and $B$ are unitarily equivalent.
\end{theorem}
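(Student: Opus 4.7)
My plan is to reduce the statement to Theorem~\ref{th:general n fnite} by showing that $A\approx B$ forces $A_c\approx B_c$. Once this is established, Theorem~\ref{th:general n fnite} applies (since $\dim\DD_{A_c}=\dim\DD_A<\infty$) to give that $A_c$ and $B_c$ are unitarily equivalent, while Corollary~\ref{co:unit cnu}(iii) together with Theorem~\ref{th:solution for sim} handles the unitary parts. Everything rests on proving that any isometry $\Omega\colon\HH_A\to\HH_B$ implementing $A\Prec B$ automatically satisfies $\Omega(\HH_A^c)\subseteq\HH_B^c$.

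From Corollary~\ref{co:unit cnu}(iii) and Theorem~\ref{th:solution for sim} we already know that $A_u$ and $B_u$ are unitarily equivalent; hence $A_a$ and $B_a$ are unitarily equivalent (so $B_a$ also has finite multiplicity) and $A_s$ and $B_s$ are unitarily equivalent. Given $\Omega$, Lemma~\ref{le:invariant is reducing for unitary} shows that $\Omega(\HH_A^u)$ reduces $B$ and so lies in $\HH_B^u$. The absolutely continuous restriction $B|\Omega(\HH_A^a)$, being unitarily equivalent to $A_a$, must sit in $\HH_B^a$; and since $B_a|\Omega(\HH_A^a)$ is unitarily equivalent to the finite-multiplicity unitary $B_a$, Lemma~\ref{le:unitary multiplicity} gives $\Omega(\HH_A^a)=\HH_B^a$. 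Consequently $\Omega(\HH_A^c)\perp\HH_B^a$, and together with $\Omega(\HH_A^c)\perp\Omega(\HH_A^s)\subseteq\HH_B^s$ this places $\Omega(\HH_A^c)\subseteq\HH_B^c\oplus\HH_B^s$.

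The main obstacle is removing the singular component. Consider $V:=P_{\HH_B^s}\Omega|_{\HH_A^c}\colon\HH_A^c\to\HH_B^s$. Because $\HH_B^s$ reduces $B$, the projection $P_{\HH_B^s}$ commutes with $B$ and $V$ intertwines $A_c$ with $B_s$. I claim that $V=0$, which amounts to the following standard fact: any bounded intertwiner between a c.n.u.\ contraction and a singular unitary operator must vanish. Indeed, by the intertwining lifting theorem of~\cite{NF}, $V$ extends to a bounded operator $\tilde V\colon\KK_{A_c}\to\HH_B^s$ with $\tilde V|_{\HH_A^c}=V$ and $\tilde V U_{A_c}=B_s\tilde V$, where $U_{A_c}$ is the minimal unitary dilation of $A_c$. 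Since $U_{A_c}$ is absolutely continuous (see Section~\ref{se:prelim}) while $B_s$ is singular, their spectral measures are mutually singular, so the only bounded intertwiner between them is $0$. Hence $V=0$ and $\Omega(\HH_A^c)\subseteq\HH_B^c$.

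Thus $\Omega|_{\HH_A^c}$ implements $A_c\Prec B_c$; the same argument applied to an isometry implementing $B\Prec A$ (once again using the finite multiplicity of $A_a$) yields $B_c\Prec A_c$, so $A_c\approx B_c$. Theorem~\ref{th:general n fnite} then gives that $A_c$ and $B_c$ are unitarily equivalent, and combining this with the unitary equivalence of $A_u$ and $B_u$ from the first paragraph produces a unitary equivalence between $A=A_c\oplus A_u$ and $B=B_c\oplus B_u$. Apart from the intertwiner-vanishing step identified as the obstacle, the remaining work is bookkeeping: the finite multiplicity of $A_a$ is what rigidifies the image of $\HH_A^a$ inside $\HH_B^u$, and the finite defect of $A$ is needed to invoke Theorem~\ref{th:general n fnite} at the end.
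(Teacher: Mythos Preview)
Your proof is correct and follows the same overall architecture as the paper's: first dispose of the unitary parts via Corollary~\ref{co:unit cnu}(iii) and Theorem~\ref{th:solution for sim}, then use the finite multiplicity of $A_a$ together with Lemma~\ref{le:unitary multiplicity} to pin down $\Omega(\HH_A^a)=\HH_B^a$, and finally show $\Omega(\HH_A^c)\subseteq\HH_B^c$ so that Theorem~\ref{th:general n fnite} can be applied to the c.n.u.\ parts.

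The one genuine difference is in how you eliminate the singular component of $\Omega(\HH_A^c)$. The paper proves a separate lemma (Lemma~\ref{le:order extends to dilations}) showing that an implementing isometry for $A_c\oplus A_s\Prec B_c\oplus B_s$ extends to one between the minimal unitary dilations; since the dilation of $A_c$ is absolutely continuous, its image lands in $\KK_B'$, and intersecting with $\HH_B$ forces $\Omega(\HH_A^c)\subseteq\HH_B^c$. You instead project onto $\HH_B^s$ to obtain a bounded intertwiner $V$ between the c.n.u.\ contraction $A_c$ and the singular unitary $B_s$, lift it via the intertwining dilation theorem of~\cite{NF} to an intertwiner between the absolutely continuous $U_{A_c}$ and $B_s$, and conclude $V=0$ from mutual singularity. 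Both arguments exploit the same dichotomy (absolutely continuous dilation versus singular unitary), but yours trades the ad hoc dilation-extension lemma for the heavier off-the-shelf commutant lifting theorem; this makes your route slightly more self-contained at the cost of invoking a deeper result. A minor remark: in your second paragraph the clause about $\Omega(\HH_A^c)\perp\Omega(\HH_A^s)$ is superfluous, since $\Omega(\HH_A^c)\perp\HH_B^a$ alone already gives $\Omega(\HH_A^c)\subseteq\HH_B^c\oplus\HH_B^s$.
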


\begin{proof}
	Consider the decomposition $\HH_A=\HH_A^{c}\oplus\HH_A^{a}\oplus \HH_A^s$, which reduces $A$ to its completely nonunitary, absolutely continuous (with respect to Lebesgue measure) unitary, and singular unitary parts, respectively.
	According to Corollary~\ref{co:unit cnu}, $A_u$ is unitarily equivalent to $B_u$, and therefore $A_a, B_a$ are also unitarily equivalent. 
	
	Suppose $\Omega:\HH_A\to \HH_B$ is an isometry that implements  $A\Prec B$; that is $\Omega A=B\Omega$. Then $ \Omega \HH_A^a$ is a subspace of $\HH_B$ invariant with respect to $B$ and such that $B| \Omega \HH_A^a$ is unitarily equivalent to $A_a$; it also reduces $B$ by Lemma~\ref{le:invariant is reducing for unitary} to an absolutely    continuous unitary operator, whence $\Omega\HH_A^a\subset \HH_B^a$.  Then assumption (i) and Lemma~\ref{le:unitary multiplicity} imply that $\Omega\HH_A^a= \HH_B^a$. 
	So 
	\[
\Omega(\HH_A^{c}\oplus \HH_A^s)=	\Omega(\HH_A\ominus\HH_A^a )\subset\HH_B\ominus \HH_B^a= \HH_B^{c}\oplus \HH_B^s
	\]
	  is invariant to $B$ and implements the relation $A_c\oplus A_s\Prec B_c\oplus B_s$.

	Consider now the minimal unitary dilations $U\oplus A_s\in\LL(\KK_A'\oplus \HH_A^s)$ and $V\oplus B_s\in\LL(\KK_B'\oplus \HH_B^s)$ of $A_c\oplus A_s$ and $B_c\oplus B_s$ respectively. By Lemma~\ref{le:order extends to dilations}, $U\oplus A_s\Prec V\oplus B_s $ and there is an isometry $\tilde{\Omega}:\KK_A'\oplus \HH_A^s\to \KK_B'\oplus \HH_B^s $ which extends $\Omega|\HH_A^{c}\oplus \HH_A^s$, such that $\tilde{\Omega}(U\oplus A_s)= (V\oplus B_s)\tilde{\Omega} $. 
	
	But $U$, being the minimal unitary dilation of a c.n.u. contraction, is absolutely continuous. Therefore $\tilde{\Omega}(\KK_A')\subset \KK_B'$. By Lemma~\ref{le:order extends to dilations}, $\Omega=\tilde{\Omega}|\HH_A^c\oplus \HH_A^s$, whence $\Omega(\HH^c_A)\subset \KK'_B\cap \HH_B=\HH^c_B$. Moreover, from the equality
	 $\Omega(A_c\oplus A_s)=(B_c\oplus B_s)\Omega$ it follows then that $\Omega A_c=B_c\Omega$, so $\Omega|\HH^c_A$ implements the relation
	 $A_c\Prec B_c$.
	
	The hypothesis of the theorem being symmetric in $A,B$, the  roles of $A$ and $B$ can be interchanged, whence $A_c\approx B_c$. As $\dim\DD_{A_c}=\dim\DD_A<\infty$, we may apply Theorem~\ref{th:general n fnite} to conclude that $A_c$ and $B_c$ are unitarily equivalent. Since at the beginning of the proof we had noted that $A_u$ and $B_u$ are also unitarily equivalent, the same is then true about $A=A_c\oplus A_u$ and $B=B_c\oplus B_u$.
\end{proof}
 
 Examples~\ref{ex:basic example} and~\ref{ex:sum of Jordan} show that both  conditions (i) and (ii) in the statement of Theorem~\ref{th:a more general result} are necessary to ensure the truth of statement $(*)$.
 
 \section*{Acknowledgements}

The author thanks the referee for his careful reading and remarks, which lead to a significant improvement of the paper.
 
%

\end{document}